%
%
%
%
%
%
%
%
%
\documentclass[12pt]{amsart}
\usepackage{amssymb,latexsym}
\usepackage{enumerate}

\makeatletter
\@namedef{subjclassname@2010}{%
  \textup{2010} Mathematics Subject Classification}
\makeatother



\newtheorem{thm}{Theorem}[section]
\newtheorem{cor}[thm]{Corollary}



\theoremstyle{definition}
\newtheorem{defn}[thm]{Definition}
\newtheorem{remark}[thm]{Remark}
\newtheorem{ex}[thm]{Example}



\numberwithin{equation}{section}


\frenchspacing

\textwidth=13.5cm
\textheight=23cm
\parindent=16pt
\topmargin=-0.5cm


\def\ds{\displaystyle}

\def\Z {{\mathbb Z}}

\def\Q {{\mathbb Q}}

\def\C {{\mathcal C}}

\def\red#1 {\textcolor{red}{#1 }}
\def\blue#1 {\textcolor{blue}{#1 }}



\baselineskip=17pt



\begin{document}

\title[Using Lucas Sequences to Generalize a Theorem of Sierpi\'{n}ski]{Using Lucas Sequences to Generalize a Theorem of Sierpi\'{n}ski}
\author[Lenny Jones]{Lenny Jones}
\address{Department of Mathematics, Shippensburg University, Pennsylvania, USA}
\email{lkjone@ship.edu}

\date{\today}

\begin{abstract}
In 1960, Sierpi\'{n}ski proved that there exist infinitely many odd positive integers $k$ such that $k\cdot 2^n+1$ is composite for all positive integers $n$. In this paper, we prove some generalizations of Sierpi\'{n}ski's theorem with $2^n$ replaced by expressions involving certain Lucas sequences $U_n(\alpha,\beta)$. In particular, we show the existence of infinitely many Lucas pairs $(\alpha,\beta)$, for which there exist infinitely many positive integers $k$, such that $k \left(U_n(\alpha,\beta)+(\alpha-\beta)^2\right)+1$ is composite for all integers $n\ge 1$. Sierpi\'{n}ski's theorem is the special case of $\alpha=2$ and $\beta=1$. Finally, we establish a nonlinear version of this result by showing that there exist infinitely many rational integers $\alpha>1$, for which there exist infinitely many positive integers $k$, such that $k^2 \left(U_n(\alpha,1)+(\alpha-1)^2\right)+1$ is composite for all integers $n\ge 1$.
\end{abstract}

\subjclass[2010]{Primary 11B37, 11B39; Secondary 11D59}
\keywords{coverings, Sierpi\'{n}ski numbers, Lucas sequences}

\maketitle

\section{Introduction}\label{Intro}



The following concept, originally due to Erd\"os \cite{Ecov}, is crucial to all results in this article.
\begin{defn}\label{defncov}
  A {\it covering} of the integers is a system of congruences $x\equiv r_i \pmod{m_i}$ such that every integer satisfies at least one of the congruences. A covering is said to be a {\it finite covering} if the covering contains only finitely many congruences.
\end{defn}
\begin{remark}
Since all coverings in this paper are finite coverings, we omit the word ``finite".
\end{remark}
In 1960, using a particular covering, Sierpi\'{n}ski \cite{Sier} published a proof of the fact that there exist infinitely many odd positive integers $k$ such that
$k\cdot 2^n+1$ is composite for all natural numbers $n$. Any such value of $k$ is called a Sierpi\'{n}ski number. Since then, several authors \cite{Chen1,Chen3,Chen2,Chen4,Chen5,FFK,FM04,F1,F2,Izotov,Lenny1,Lenny2} have investigated generalizations and variations of this result. We should also mention a paper of Riesel \cite{Riesel}, which actually predates the paper of Sierpi\'{n}ski, in which Riesel proves a similar result for the sequence of integers $k\cdot 2^n-1$.
We give a proof of Sierpi\'{n}ski's original theorem since it provides an easy introduction to the techniques used in this paper.
\begin{thm}[Sierpi\'{n}ski \cite{Sier}]\label{Thm:Sierpinski}
There exist infinitely many odd positive integers $k$ such that
$k\cdot 2^n+1$ is composite for all integers $n\ge 1$.
\end{thm}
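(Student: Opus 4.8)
The plan is to exhibit a finite covering of the positive integers in which each congruence is linked to a prime $p$ that pins down a fixed divisor of $k\cdot 2^n+1$ whenever $n$ lies in that congruence class. First I would record an explicit covering: the seven congruences $n\equiv 1\pmod 2$, $n\equiv 2\pmod 4$, $n\equiv 4\pmod 8$, $n\equiv 8\pmod{16}$, $n\equiv 16\pmod{32}$, $n\equiv 32\pmod{64}$, and $n\equiv 0\pmod{64}$, and verify directly that every integer satisfies at least one of them. To the modulus $m$ of each congruence I attach a prime $p$ for which $2$ has multiplicative order exactly $m$: the primes $3,5,17,257,65537$ for $m=2,4,8,16,32$, and the two distinct prime factors $641$ and $6700417$ of $2^{32}+1$ for the two congruences of modulus $64$ (each divides $2^{32}+1$ but not $2^{32}-1$, so each has order exactly $64$).

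The key local computation is that if $\mathrm{ord}_p(2)=m$ and $n\equiv r\pmod m$, then $2^n\equiv 2^r\pmod p$, whence $k\cdot 2^n+1\equiv k\cdot 2^r+1\pmod p$. Thus, choosing $k\equiv -2^{-r}\pmod p$ forces $p\mid k\cdot 2^n+1$ for every $n$ in that class. Since the seven primes above are distinct and odd, the Chinese Remainder Theorem produces a single residue class for $k$, modulo the product $P=3\cdot 5\cdot 17\cdot 257\cdot 65537\cdot 641\cdot 6700417$, satisfying all seven congruences at once; imposing in addition $k\equiv1\pmod 2$ keeps $k$ odd. For any $k$ in this class and any $n\ge1$, the covering guarantees that some congruence applies, so the associated prime divides $k\cdot 2^n+1$.

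To upgrade divisibility to compositeness, I would observe that the largest prime in play is $6700417$, while for $n\ge1$ one has $k\cdot 2^n+1\ge 2k+1$; hence every term strictly exceeds its prime divisor as soon as $k>3350208$. Consequently, for all sufficiently large $k$ in the progression, each $k\cdot 2^n+1$ is a proper multiple of a prime and therefore composite. Since the admissible $k$ form an arithmetic progression modulo $2P$, there are infinitely many such positive odd $k$, which gives the theorem.

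The hardest step — and the real content of the theorem — will be ensuring the existence of a covering whose moduli are simultaneously realizable as the orders of $2$ modulo distinct primes. This succeeds here precisely because each $m\in\{2,4,8,16,32\}$ is the order of $2$ modulo a Fermat prime, and because $2^{32}+1$ splits into two distinct primes of order $64$, furnishing exactly the two primes required for the two residue classes of modulus $64$. Everything else is bookkeeping with the Chinese Remainder Theorem together with a one-line size estimate.
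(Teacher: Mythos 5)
Your proposal is correct and takes essentially the same route as the paper's proof: the identical covering with moduli $2,4,8,16,32,64,64$, the same seven primes $3,5,17,257,65537,641,6700417$ attached via the order of $2$, and the Chinese Remainder Theorem applied to the resulting congruences on $k$. Your explicit verification of the orders, the oddness condition, and the size estimate upgrading divisibility to compositeness are details the paper leaves implicit, but the argument is the same.
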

\begin{proof}
Consider the following covering $n\equiv r_i \pmod{m_i}$:
\begin{center}
$\begin{array}{ccccccccc}
  i & & 1&2&3&4&5&6&7\\ \hline
  r_i & & 1 & 2 & 4 & 8 & 16 & 32 & 0\\
 m_i & & 2 & 4 & 8 & 16 & 32 & 64 & 64.
  \end{array}$
 \end{center}
For each $i$, when $n\equiv r_i \pmod{m_i}$ and $k\equiv b_i
\pmod{p_i}$ (from below),
\begin{center}$\begin{array}{ccccccccc}
i & & 1&2&3&4&5&6&7\\ \hline
  b_i & & 1 & 1 & 1 & 1 & 1 & 1 & -1\\
  p_i & & 3 & 5 & 17 & 257 & 65537 & 641 & 6700417,
  \end{array}$
  \end{center}
  it is easy to check that $k\cdot 2^n+1$ is divisible by $p_i$. Now, apply the Chinese Remainder Theorem
  to the system $k\equiv b_i \pmod{p_i}$.
   Then, for any integer $n\ge 0$, and any such solution $k$, we have that $k\cdot 2^n+1$ is divisible by at least one prime from the set $\left\{ 3,5,17,257,641,65537,6700417 \right\}$.
\end{proof}
This paper is concerned with generalizations of Theorem \ref{Thm:Sierpinski} which involve Lucas sequences. 
A pair $\left(\alpha,\beta\right)$ of algebraic integers, where $\alpha +\beta$ and $\alpha \beta$ are nonzero relatively prime rational integers, and $\alpha/ \beta$ is not a root of unity, is called a {\it Lucas pair}. For $n\ge 0$, we can then define a sequence of rational integers
\[U_n(\alpha,\beta):=\frac{\alpha^n-\beta^n}{\alpha- \beta}.\]
 When the values of $\alpha$ and $\beta$ are general, or they are clear from the context of the discussion, we simply write $U_n$. Such a sequence is known as a {\it Lucas sequence of the first kind}. Unless otherwise stated, we assume throughout this paper, without loss of generality, that $\alpha>\beta$. The observation that
\[2^n=U_n\left(2,1\right)+(2-1)^2\]
provides the motivation for the results in this paper. We replace $2^n$ with $U_n\left(\alpha,\beta\right)+(\alpha-\beta)^2$, and investigate when there exist infinitely many values of $k$ such that the sequence
\[k\left(U_n\left(\alpha,\beta\right)+(\alpha-\beta)^2\right)+1\] is composite for all integers $n\ge 1$.

 This paper is organized as follows. In Section \ref{Section:I}, our focus is on the Lucas pairs $\left(\alpha, \beta\right)$, where $\alpha$ is a rational integer and $\beta=1$.
In Section \ref{Section:II}, we consider Lucas pairs $\left(\alpha, \beta\right)$, where $\alpha$ and $\beta$ are not necessarily rational. The rational Lucas pairs $\left(\alpha, \beta\right)$ covered in Section \ref{Section:II} have the property that $\alpha-\beta=1$. Although the Lucas pair $(2,1)$ from Theorem \ref{Thm:Sierpinski} falls into this category, the actual method used in the proof of Theorem \ref{Thm:general} in Section \ref{Section:II} does not ``capture" this particular Lucas pair. However, the technique can be modified to achieve this goal. In Section \ref{Section:III}, we develop a more general approach. Theoretically, the techniques there can be used for any Lucas pair. However, it is difficult to categorize the Lucas pairs for which the methods will actually be successful.


A key idea in the proof of Theorem \ref{Thm:Sierpinski} is the availability of enough useful primes: a unique prime $p$ corresponding to each congruence in the covering that allows us to reduce the power $2^n$ modulo $p$ to an unambiguous value. In all main theorems in this article, we need such a set of primes. However, the way these primes are ``manufactured" is quite different in each section. Our approach in Section \ref{Section:I} is more typical of theorems of this nature. We use the concept of a {\it primitive divisor}; see Section \ref{Section:I} for a full explanation. But in Sections \ref{Section:II} and \ref{Section:III}, the methods used to produce the desired primes appear to be new. In fact, it seems unlikely that traditional applications of primitive divisors could be used to prove the results in Sections \ref{Section:II} and \ref{Section:III}.

\section{Generalization I}\label{Section:I}
In this section, we present a generalization of Theorem \ref{Thm:Sierpinski}, whose proof utilizes the concept of a primitive divisor.
As previously mentioned, primitive divisors are often useful in proving theorems similar to Theorem \ref{Thm:Sierpinski}, and various related applications \cite{Chen2,Chen4,Chen5,Chen2009a,Lenny1,Lenny2,Sun2009a,Sun1998a}.

\begin{defn}\label{Def:primdiv}
 For any Lucas pair $\left( \alpha, \beta \right)$, we define a {\it primitive (prime) divisor} of $U_n$ to be a prime $p$ such that both of the following conditions hold:
\begin{itemize}
  \item $U_n \equiv 0 \pmod{p}$,
  \item $\left(\alpha-\beta\right)^2U_1U_2\cdots U_{n-1} \not \equiv 0 \pmod{p}$.
\end{itemize}
We say that the Lucas pair $\left(\alpha, \beta \right)$ is {\it $n$-defective} if $U_n$ has no primitive divisor.
\end{defn}

 The following result, originally due to Zsigmondy \cite{Z}, provides us with conditions in certain situations under which these primitive divisors exist.
\begin{thm}\label{Thm:Zsig}
Let $\alpha$ and $\beta$ be coprime positive rational integers, and let $n$ be a positive integer. Then there exists at least one prime $p$ such that $\alpha^n-\beta^n\equiv 0 \pmod{p}$ and $\alpha^m-\beta^m\not \equiv 0 \pmod{p}$ for all positive integers $m<n$, with the following exceptions:
\begin{itemize}
\item $\alpha=2$, $\beta=1$ and $n=6$
\item $\alpha+\beta$ is a power of $2$ and $n=2$.
\end{itemize}
\end{thm}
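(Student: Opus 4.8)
The plan is to reduce the theorem to a statement about the homogeneous cyclotomic polynomials and then to a size estimate. For $d\ge 1$ write $\Phi_d(\alpha,\beta)=\prod_{\gcd(j,d)=1}(\alpha-\zeta_d^{\,j}\beta)$, where $\zeta_d=e^{2\pi i/d}$; this is a positive rational integer, and one has the factorization $\alpha^n-\beta^n=\prod_{d\mid n}\Phi_d(\alpha,\beta)$. A prime $p$ satisfies the conclusion of the theorem (it is a \emph{primitive divisor} of $\alpha^n-\beta^n$) exactly when the multiplicative order of $\alpha\beta^{-1}$ modulo $p$ equals $n$; here $\beta$ is invertible mod $p$ since $\gcd(\alpha,\beta)=1$. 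Thus everything reduces to producing a prime factor of $\Phi_n(\alpha,\beta)$ of this ``order exactly $n$'' type. The case $n=1$ is elementary (it merely requires a prime dividing $\alpha-\beta$), so I would assume $n\ge 2$ throughout.

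The first step is a dichotomy for the primes $p\mid\Phi_n(\alpha,\beta)$: either $p$ has order exactly $n$ (a genuine primitive divisor), or else $p\mid n$, in which case $p$ must be the largest prime dividing $n$. I would prove this by noting that $p\mid\Phi_n(\alpha,\beta)$ forces $p\mid\alpha^n-\beta^n$, so the order $e$ of $\alpha\beta^{-1}$ divides $n$; if $e<n$, then comparing $p$-adic valuations in the factorization above and invoking the lifting-the-exponent behaviour of $\Phi_d$ at primes dividing $d$, one finds $p\mid n$ and that $n/e$ is a power of $p$, which pins $p$ as the top prime of $n$. The companion step controls multiplicity: when this exceptional prime $p$ divides $\Phi_n(\alpha,\beta)$, it divides it exactly once, the sole exception being $p=2$, $n=2$. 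Combining the two steps, if $\Phi_n(\alpha,\beta)$ has \emph{no} primitive divisor, then its only possible prime factor is the largest prime $p$ of $n$, to the first power, so $\Phi_n(\alpha,\beta)\in\{1,p\}$ and in particular $\Phi_n(\alpha,\beta)\le p\le n$.

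It remains to contradict $\Phi_n(\alpha,\beta)\le n$ for all but finitely many cases, and this size estimate is where the real work lies. From $\Phi_n(\alpha,\beta)=\prod_{\gcd(j,n)=1}(\alpha-\zeta_n^{\,j}\beta)$ and the reverse triangle inequality $|\alpha-\zeta_n^{\,j}\beta|\ge\alpha-\beta$, one gets the crude bound $\Phi_n(\alpha,\beta)\ge(\alpha-\beta)^{\varphi(n)}$. The main obstacle is that this degenerates precisely when $\alpha-\beta=1$; to overcome it I would pair each root $\zeta$ with its conjugate $\bar\zeta$ and use the strict inequality $\alpha^2-2\alpha\beta\cos\theta+\beta^2>(\alpha-\beta)^2$ for $\zeta=e^{i\theta}\neq 1$ to sharpen the estimate enough to beat the linear quantity $n$ whenever $\varphi(n)$ is not too small. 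This confines the potential failures to a short explicit list of small $n$ (and, for each, small $\alpha,\beta$). A direct check of these finitely many cases then yields exactly the two advertised exceptions: $(\alpha,\beta,n)=(2,1,6)$, where $\Phi_6(2,1)=3$ is itself the top prime of $6$; and $n=2$ with $\alpha+\beta$ a power of $2$, where $\Phi_2(\alpha,\beta)=\alpha+\beta$ contributes only the prime $2=n$.
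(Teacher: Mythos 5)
A preliminary remark: the paper itself gives no proof of this statement --- it is quoted as a classical result (Zsigmondy, with Bang and Birkhoff--Vandiver cited alongside) and used only through Corollary \ref{Cor:Zsig} --- so your proposal must stand on its own. Its arithmetic skeleton is the standard one and is sound: the reduction to finding a prime divisor of $\Phi_n(\alpha,\beta)$ for which $\alpha\beta^{-1}$ has order exactly $n$, the dichotomy that any other prime divisor of $\Phi_n(\alpha,\beta)$ must be the largest prime factor of $n$, and the multiplicity statement (such a prime divides $\Phi_n(\alpha,\beta)$ to the first power only, except $p=2$, $n=2$) are all correct, and correctly combined they give: if there is no primitive divisor and $n\ge 3$, then $\Phi_n(\alpha,\beta)\le n$.

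The genuine gap is the complementary size estimate $\Phi_n(\alpha,\beta)>n$, which is the analytic heart of the theorem, and the method you sketch for it does not work. Pairing conjugate roots gives factors $\alpha^2-2\alpha\beta\cos\theta+\beta^2=(\alpha-\beta)^2+4\alpha\beta\sin^2(\theta/2)$, and since a primitive angle can be as small as $2\pi/n$, the uniform per-pair bound your strict inequality yields is $\Phi_n(\alpha,\beta)\ge\bigl((\alpha-\beta)^2+4\alpha\beta\sin^2(\pi/n)\bigr)^{\varphi(n)/2}$. In the critical case $(\alpha,\beta)=(2,1)$ the logarithm of this bound is at most $\tfrac{1}{2}\varphi(n)\cdot 8\sin^2(\pi/n)\le 4\pi^2\varphi(n)/n^2\le 4\pi^2/n$, which tends to $0$: the bound tends to $1$ and never beats $n$ once $n$ is moderately large. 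So the potential failures are not confined to ``a short explicit list of small $n$''; for fixed small $\alpha,\beta$ with $\alpha-\beta=1$ your estimate fails for \emph{all} large $n$ --- the quantifiers in your final step are backwards. What is actually needed is a bound of the true order $\Phi_n(\alpha,\beta)\approx\alpha^{\varphi(n)}$, e.g.\ via
\begin{equation*}
\log\Phi_n(\alpha,\beta)=\sum_{d\mid n}\mu(n/d)\log\bigl(\alpha^d-\beta^d\bigr)
=\varphi(n)\log\alpha+\sum_{d\mid n}\mu(n/d)\log\bigl(1-(\beta/\alpha)^d\bigr),
\end{equation*}
where each error term has absolute value at most $\log\alpha$ because $1-(\beta/\alpha)^d\ge 1/\alpha$; this gives $\Phi_n(\alpha,\beta)\ge\alpha^{\varphi(n)-2^{\omega(n)}}$, and one then checks $\varphi(n)-2^{\omega(n)}>\log_2 n$ outside a small explicit list. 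That Möbius bookkeeping (the content of Birkhoff--Vandiver's proof) is entirely absent from your sketch, and no pointwise strict inequality at each root can substitute for it. A smaller slip: for $n=1$ you assert the claim ``merely requires a prime dividing $\alpha-\beta$,'' but when $\alpha-\beta=1$ no such prime exists; $n=1$ with $\alpha-\beta=1$ is a further exception that both your argument and the statement as printed silently omit (harmless for the paper, which only invokes the theorem for $n\ge 2$, but it should be flagged).
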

Theorem \ref{Thm:Zsig} is a generalization of the case when $\beta =1$, which is due to Bang \cite{Bang}. Birkhoff and Vandiver \cite{BV} proved Theorem \ref{Thm:Zsig} independently of Zsigmondy.

%

From Definition \ref{Def:primdiv}, the following corollary of Theorem \ref{Thm:Zsig} is immediate.
\begin{cor}\label{Cor:Zsig}
Let $(\alpha,\beta)$ be a rational Lucas pair, and let $n\ge 2$ be an integer. Then $U_n$ has a primitive divisor, with the only exceptions being the exceptions noted in Theorem \ref{Thm:Zsig}.
\end{cor}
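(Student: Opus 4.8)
The plan is to deduce the corollary directly from Zsigmondy's theorem (Theorem \ref{Thm:Zsig}) by translating its statement about the full term $\alpha^n-\beta^n$ into one about the normalized term $U_n=(\alpha^n-\beta^n)/(\alpha-\beta)$. First I would record that a rational Lucas pair $(\alpha,\beta)$ consists of coprime rational integers: if a prime divided both $\alpha$ and $\beta$, it would divide $\alpha+\beta$ and $\alpha\beta$, contradicting the defining requirement that these be coprime. This places us squarely in the hypotheses of Theorem \ref{Thm:Zsig}.

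Next, fix $n\ge 2$ and assume $(\alpha,\beta)$ and $n$ do not constitute one of the two exceptional cases in Theorem \ref{Thm:Zsig}. That theorem then furnishes a prime $p$ with $\alpha^n-\beta^n\equiv 0\pmod p$ and $\alpha^m-\beta^m\not\equiv 0\pmod p$ for every $m$ with $1\le m<n$. The identity to exploit is $\alpha^m-\beta^m=(\alpha-\beta)U_m$. Taking $m=1$ gives $p\nmid(\alpha-\beta)$, since $\alpha^1-\beta^1=\alpha-\beta$ and $1<n$. Combined with $p\nmid(\alpha-\beta)U_m$ for each $m<n$, this forces $p\nmid U_m$ for $1\le m\le n-1$; and from $p\mid(\alpha-\beta)U_n$ together with $p\nmid(\alpha-\beta)$ we obtain $p\mid U_n$.

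These two facts are precisely the bullet conditions of Definition \ref{Def:primdiv}: we have $U_n\equiv 0\pmod p$, while $(\alpha-\beta)^2U_1U_2\cdots U_{n-1}\not\equiv 0\pmod p$ because $p$ divides neither $\alpha-\beta$ nor any of $U_1,\dots,U_{n-1}$. Hence $p$ is a primitive divisor of $U_n$, and since the argument breaks down only when Theorem \ref{Thm:Zsig} supplies no such prime, the list of exceptions transfers verbatim.

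I expect the only delicate point to be the bookkeeping with the factor $\alpha-\beta$: one must invoke the $m=1$ instance of Zsigmondy's non-divisibility to rule out $p\mid(\alpha-\beta)$, and it is exactly this that renders the $(\alpha-\beta)^2$ in the definition harmless. Everything else is a direct transcription of Zsigmondy's conclusion through $\alpha^m-\beta^m=(\alpha-\beta)U_m$, which is why the deduction is immediate.
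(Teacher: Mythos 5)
Your proposal is correct and is precisely the deduction the paper has in mind: the paper offers no written proof, stating only that the corollary is ``immediate'' from Definition \ref{Def:primdiv} and Theorem \ref{Thm:Zsig}, and your argument is exactly that immediate translation, with the identity $\alpha^m-\beta^m=(\alpha-\beta)U_m$ and the $m=1$ case handling the $(\alpha-\beta)^2$ factor spelled out carefully. No gaps; nothing differs in approach beyond your (welcome) explicitness.
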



The situation when $\alpha$ and $\beta$ are not rational integers is much more difficult. Early work was done by Carmichael \cite{Car}, Ward \cite{Ward} and Voutier \cite{VoutierI}. More recently, using deep ideas from transcendence theory, Bilu, Hanrot and Voutier \cite{BHV} have shown, for any Lucas pair $(\alpha,\beta)$, that $U_n$ has a primitive divisor for all $n\ge 30$, and they have determined all $n$-defective Lucas pairs.

In this section, our focus is on Lucas pairs $\left(\alpha, 1\right)$, where $\alpha$ is a rational integer. The approach is somewhat conventional, in that we use a covering that is constructed by means of primitive divisors. However, complications arise in the proof, and we are forced to show the existence of a second primitive divisor in certain situations. In general, it is still a mystery as to exactly when $U_n$ possesses a second primitive divisor. The best known results in this direction, when $\alpha$ and $\beta$ are rational, are due to Schinzel \cite{Schinzel1962prim}, but unfortunately, they are not applicable in all of our situations.
We state below, without proof, some well-known results that relate these particular Lucas sequences to values of certain cyclotomic polynomials. These facts are useful here to help establish the existence of a second primitive divisor. We let $\Phi_n(x)$ denote the $n$-th cyclotomic polynomial, and $U_n$ denote $U_n(\alpha,1)$, where $\alpha\ge 2$ is an integer.

The following theorem is due to Legendre \cite{RBook}.
\begin{thm} \label{Thm:Legendre}
  Let $q$ be a prime divisor of $\Phi_n(\alpha)$, and let $ord_q(\alpha)$ denote the order of $\alpha$ modulo $q$. If $ord_q(\alpha)<n$, then $q$ divides $n$. 
\end{thm}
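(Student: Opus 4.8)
The plan is to combine the factorization
\[
x^n-1=\prod_{d\mid n}\Phi_d(x)
\]
with the standard repeated-root test via the formal derivative. First I would note that $\Phi_n(x)$ is one of the factors above, so $\Phi_n(\alpha)\mid \alpha^n-1$; hence the hypothesis $q\mid\Phi_n(\alpha)$ forces $\alpha^n\equiv 1\pmod q$. In particular $q\nmid\alpha$, so $d:=\mathrm{ord}_q(\alpha)$ is well defined and divides $n$.

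Assume now that $d<n$, so that $d$ is a proper divisor of $n$. Since $\alpha^d\equiv 1\pmod q$, we have $q\mid \alpha^d-1=\prod_{e\mid d}\Phi_e(\alpha)$, and therefore $q\mid\Phi_e(\alpha)$ for some divisor $e$ of $d$. Because $e\le d<n$, we have $e\ne n$, and both $\Phi_e$ and $\Phi_n$ occur as distinct factors of $x^n-1$. Reducing everything modulo $q$, the image $\bar\alpha\in\F_q$ is a common root of these two distinct factors.

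The crux is to turn this coincidence into the divisibility $q\mid n$. Since $\Phi_e$ and $\Phi_n$ are distinct factors of $x^n-1$ and both vanish at $\bar\alpha$ in $\F_q$, we obtain $(x-\bar\alpha)^2\mid x^n-1$ in $\F_q[x]$; that is, $\bar\alpha$ is a repeated root of $x^n-1$ over $\F_q$. Consequently $\bar\alpha$ is also a root of the derivative $nx^{n-1}$, giving $n\,\bar\alpha^{\,n-1}=0$ in $\F_q$. As $q\nmid\alpha$ forces $\bar\alpha^{\,n-1}\ne 0$, we conclude $q\mid n$.

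The main obstacle is the repeated-root step, and the care it requires: one must verify that $\Phi_e$ and $\Phi_n$ really are distinct factors (ensured by $e\ne n$) so that $(x-\bar\alpha)^2$ genuinely divides $x^n-1$ modulo $q$, and one must use $q\nmid\alpha$ to guarantee that $\bar\alpha$ is a unit, so that the derivative relation $n\,\bar\alpha^{\,n-1}=0$ collapses to $q\mid n$ rather than to the trivial $\bar\alpha=0$.
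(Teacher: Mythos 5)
Your proof is correct. Note that the paper states this theorem without proof (it is cited as a classical result of Legendre, via Ribenboim's book), so there is no in-paper argument to compare against; your repeated-root argument --- observing that $\bar\alpha$ would be a common root of the two distinct factors $\Phi_e$ and $\Phi_n$ of $x^n-1$ over $\F_q$, hence a root of the derivative $nx^{n-1}$, which forces $q\mid n$ since $q\nmid\alpha$ --- is the standard proof of this fact, and every step, including the verification that $e\mid n$ with $e\ne n$ and that $\bar\alpha$ is a unit, is handled correctly.
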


Since \[(\alpha-1)U_n=\prod_{d|n}\Phi_d(\alpha),\]

the following corollary is immediate from
Theorem \ref{Thm:Legendre}.
\begin{cor}\label{Cor:primdiv}\text{}
\begin{enumerate}
\item \label{Cor:2} A prime divisor $q$ of $U_n$ is a primitive divisor of $U_n$ if and only if $\Phi_m(\alpha)\not \equiv 0 \pmod{q}$ for all proper divisors $m$ of $n$.
\item \label{Cor:3} If $q$ is a primitive divisor of $U_n$, then $\Phi_n(\alpha)\equiv 0 \pmod{q}$.
\item \label{Cor:4} If $\Phi_n(\alpha)\equiv 0 \pmod{q}$ and $n\not \equiv 0 \pmod{q}$, then $q$ is a primitive divisor of $U_n$.
\end{enumerate}
\end{cor}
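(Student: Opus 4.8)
The plan is to translate all three assertions into statements about the multiplicative order $d:=ord_q(\alpha)$ of $\alpha$ modulo $q$, using two elementary observations. First, from $(\alpha-1)U_m=\alpha^m-1$ one sees that for a prime $q\nmid\alpha-1$ we have $q\mid U_m$ if and only if $\alpha^m\equiv 1\pmod q$, i.e.\ if and only if $d\mid m$. Second, I would record the following characterization for $n\ge 2$: the prime $q$ is a primitive divisor of $U_n$ if and only if $d=n$. Indeed, by Definition~\ref{Def:primdiv} with $\beta=1$, primitivity means $q\mid U_n$, $q\nmid\alpha-1$, and $q\nmid U_m$ for every $1\le m\le n-1$; by the first observation this says exactly that $d\mid n$ while $d\nmid m$ for all $m<n$, which forces $d=n$. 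Conversely $d=n\ge 2$ gives $\alpha\not\equiv 1\pmod q$, hence $q\nmid\alpha-1$, and then $q\mid U_n$ and $q\nmid U_m$ for $m<n$, so $q$ is primitive.

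Granting this, part~(\ref{Cor:2}) follows by combining the characterization with the factorization $\alpha^m-1=\prod_{e\mid m}\Phi_e(\alpha)$. If $q$ is a prime divisor of $U_n$, then $d\mid n$, and I claim that $d<n$ holds precisely when $q\mid\Phi_e(\alpha)$ for some proper divisor $e$ of $n$: if $d<n$ then $q\mid\alpha^d-1=\prod_{e\mid d}\Phi_e(\alpha)$ supplies such an $e\mid d$, while conversely $q\mid\Phi_e(\alpha)$ for a proper divisor $e$ forces $\alpha^e\equiv 1$, hence $d\mid e<n$. Taking contrapositives, $d=n$ --- that is, $q$ is primitive --- if and only if $\Phi_m(\alpha)\not\equiv 0\pmod q$ for every proper divisor $m$ of $n$, which is exactly part~(\ref{Cor:2}); here the divisor $m=1$ accounts for the condition $q\nmid\alpha-1$ via $\Phi_1(\alpha)=\alpha-1$. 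Part~(\ref{Cor:3}) is then immediate: if $q$ is primitive then $q\mid\alpha^n-1=\prod_{e\mid n}\Phi_e(\alpha)$, so $q\mid\Phi_e(\alpha)$ for some $e\mid n$, and by part~(\ref{Cor:2}) no proper $e$ can occur, leaving $q\mid\Phi_n(\alpha)$.

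For part~(\ref{Cor:4}), the hypothesis $q\mid\Phi_n(\alpha)$ gives $q\mid\alpha^n-1$, so $d\mid n$. The one place where the factorization alone is insufficient is exactly here: a priori $q$ might divide $\Phi_n(\alpha)$ while also dividing $\Phi_d(\alpha)$ for some $d<n$, so one cannot conclude $d=n$ directly. This is precisely the obstacle that Theorem~\ref{Thm:Legendre} removes, since it asserts that $q\mid\Phi_n(\alpha)$ together with $ord_q(\alpha)<n$ would force $q\mid n$. As we are given $n\not\equiv 0\pmod q$, the case $d<n$ is excluded, whence $d=n$, and the characterization above shows $q$ is a primitive divisor of $U_n$. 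I expect this single appeal to Legendre's theorem to be the only step needing more than the defining factorization; everything else is bookkeeping with $\alpha^m-1=\prod_{e\mid m}\Phi_e(\alpha)$.
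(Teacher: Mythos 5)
Your proof is correct and is essentially the paper's own argument in expanded form: the paper declares the corollary immediate from the factorization $(\alpha-1)U_n=\prod_{d\mid n}\Phi_d(\alpha)$ together with Theorem~\ref{Thm:Legendre}, and your order-of-$\alpha$ bookkeeping is exactly the fleshing-out of that, with Legendre's theorem invoked precisely at the one point where it is indispensable, namely part~(\ref{Cor:4}).
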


 The main result of this section is the following:
 \begin{thm} \label{Thm:main}
  Let $\alpha \ge 2$ be a rational integer. Then
 there exist infinitely many positive integers $k$ such that
    \[k \left(U_n(\alpha,1)+(\alpha-1)^2\right)+1\] is 
     composite for all integers $n\ge 1$.

\end{thm}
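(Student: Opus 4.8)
The plan is to mimic the proof of Theorem~\ref{Thm:Sierpinski}, replacing the powers of $2$ by the quantities $W_n := U_n(\alpha,1) + (\alpha-1)^2$ and replacing the Fermat-type primes by primitive divisors supplied by Corollary~\ref{Cor:Zsig}. Write $U_n = (\alpha^n - 1)/(\alpha - 1)$. The basic mechanism is that if $p$ is a prime with $p \nmid \alpha - 1$ whose order $\mathrm{ord}_p(\alpha)$ divides a modulus $m$, then $\alpha^n$, hence $U_n$, hence $W_n$, is constant modulo $p$ on each residue class $n \equiv r \pmod{m}$; if that constant is a unit modulo $p$, we may solve $k W_n + 1 \equiv 0 \pmod p$ for $k$. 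I would use the same covering as in Theorem~\ref{Thm:Sierpinski}, namely the classes $n \equiv 2^{j-1} \pmod{2^j}$ for $1 \le j \le t$ together with $n \equiv 0 \pmod{2^t}$, with $t = 6$ to start. To the class $n \equiv 2^{j-1} \pmod{2^j}$ I would attach a primitive divisor $p_j$ of $U_{2^j}$; since such a $p_j$ has $\mathrm{ord}_{p_j}(\alpha) = 2^j$, one gets $\alpha^{2^{j-1}} \equiv -1 \pmod{p_j}$ and hence $W_n \equiv (\alpha-1)^2 - 2/(\alpha-1) \pmod{p_j}$. For the final class $n \equiv 0 \pmod{2^t}$ one has $\alpha^n \equiv 1$, so $W_n \equiv (\alpha-1)^2 \pmod p$ for any prime $p$ of $2$-power order, and this is automatically nonzero because a primitive divisor never divides $\alpha - 1$.

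Two kinds of nonvanishing must be verified so that each $k \equiv b_i \pmod{p_i}$ can be chosen. The class $n \equiv 0 \pmod{2^t}$ is harmless, as just noted. For the classes $1 \le j \le t$ the requirement $W_n \not\equiv 0 \pmod{p_j}$ amounts to $(\alpha - 1)^3 \not\equiv 2 \pmod{p_j}$; since $(\alpha-1)^3 - 2$ is a fixed integer smaller than $\alpha^3$, while any primitive divisor of $U_{2^j}$ satisfies $p_j \equiv 1 \pmod{2^j}$, this can fail only for the few smallest $j$, where it is a finite check (or can be evaded by reassigning a prime of smaller $2$-power order, for which $W_n \equiv (\alpha-1)^2$). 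Once admissible primes and residues $b_i$ are in hand, the Chinese Remainder Theorem produces an arithmetic progression of $k$ with $k \equiv b_i \pmod{p_i}$ for every $i$; for all sufficiently large $k$ in this progression one has $k W_n + 1 > \max_i p_i$ for all $n \ge 1$, so $k W_n + 1$ is a proper multiple of some $p_i$, hence composite, and infinitely many such $k$ exist.

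The main obstacle is supplying \emph{enough distinct} primes. The covering has $t+1$ classes but only $t$ moduli $2, 4, \dots, 2^t$, so two classes share the modulus $2^t$; since Corollary~\ref{Cor:Zsig} guarantees only one primitive divisor of each $U_{2^j}$, I am one prime short and must exhibit a \emph{second} primitive divisor for one of the $U_{2^j}$. By Corollary~\ref{Cor:primdiv} and the identity $\Phi_{2^j}(\alpha) = \alpha^{2^{j-1}} + 1$, a second primitive divisor of $U_{2^j}$ is exactly a second distinct odd prime factor of $\alpha^{2^{j-1}} + 1$. Schinzel's theorems yield this in many cases, but not uniformly in $\alpha$, so the heart of the argument will be a direct proof that, for each $\alpha$, some $\alpha^{2^{j-1}} + 1$ (with $j \le t$, enlarging $t$ if necessary) has at least two distinct odd prime factors. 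I expect to do this with a size-and-order estimate: any prime power $q^b$ exactly dividing $\alpha^{2^{j-1}}+1$ forces $q \equiv 1 \pmod{2^j}$, and a comparison of sizes shows that $\alpha^{2^{j-1}}+1$ cannot equal $2^{\epsilon} q^{b}$ once $j$ is large relative to $\alpha$, so a second odd prime factor must appear.

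Finally, the hypothesis $\alpha \ge 2$ includes the genuinely exceptional case $\alpha + 1 = 2^s$, which is precisely the Zsigmondy exception $n = 2$ of Theorem~\ref{Thm:Zsig}: here $\Phi_2(\alpha) = \alpha + 1$ has no odd prime factor, so the odd class $n \equiv 1 \pmod 2$ cannot be assigned a prime of order $2$. I would treat this case by refining the covering, replacing the single class $n \equiv 1 \pmod 2$ by $n \equiv 1 \pmod 4$ and $n \equiv 3 \pmod 4$ and attaching to them two distinct odd primitive divisors of $U_4$, i.e.\ two odd prime factors of $\Phi_4(\alpha) = \alpha^2 + 1$. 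In this way the exceptional case again reduces to an instance of the second-primitive-divisor problem that is the crux of the proof.
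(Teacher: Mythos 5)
Your overall skeleton (a covering, one prime per congruence class making $U_n+(\alpha-1)^2$ constant and invertible modulo that prime, then the Chinese Remainder Theorem) is the right one, and your unit computations are correct. But the step you yourself call the heart of the argument --- producing a second primitive divisor of some $U_{2^j}$ --- has a genuine gap, and it cannot be closed by the method you propose. If $\alpha^{2^{j-1}}+1 = 2^{\epsilon}q^{b}$ with $b=1$, i.e.\ if $\alpha^{2^{j-1}}+1$ is a generalized Fermat prime or twice a prime, there is no conflict at all between size and the congruence $q \equiv 1 \pmod{2^j}$: that congruence only forces $q > 2^j$, which is minuscule compared with $\alpha^{2^{j-1}}+1$, so no ``comparison of sizes'' yields a contradiction. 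Your estimate can at best restrict $b \ge 2$ (and even there size arguments do not suffice when $\epsilon=1$; note $239^2+1 = 2\cdot 13^4$, so genuine Diophantine input is needed). What you actually require --- that for every $\alpha$ some member of $\alpha+1,\ \alpha^2+1,\ \alpha^4+1,\ \ldots,\ \alpha^{2^{t-1}}+1$ has two distinct odd prime factors --- is not accessible this way: nothing currently known rules out that all of these numbers are prime or twice a prime power, and enlarging $t$ does not help. The same gap sinks your fallback for the exceptional case $\alpha+1=2^s$: for $\alpha=7$ one has $\alpha^2+1 = 2\cdot 5^2$, so the classes $n \equiv 1 \pmod 4$ and $n \equiv 3 \pmod 4$ cannot receive two distinct odd primitive divisors of $U_4$, and using the single prime $5$ for both forces inconsistent conditions $k \equiv 2 \pmod 5$ and $k \equiv 3 \pmod 5$; for $\alpha = 3$ it is even worse, since $U_1+(\alpha-1)^2 = 5 \equiv 0 \pmod 5$.

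The paper avoids precisely this trap. It uses the covering $n\equiv 0 \pmod 2$, $0 \pmod 3$, $1 \pmod 4$, $1 \pmod 6$, $11 \pmod{12}$, whose moduli are \emph{distinct}, so each class is served by a primitive divisor of a different $U_m$ and no second primitive divisor is needed generically. A second primitive divisor of $U_4$ is required only in one degenerate subcase (when $A_3 \equiv 0 \pmod{p_3}$ forces $p_3 = 5$ and $\alpha \equiv 3 \pmod 5$), and there the competing prime is pinned down to $5$, so its existence reduces to the two specific equations $\alpha^2+1 = 5^n$ and $\alpha^2+1 = 2\cdot 5^n$, whose complete solution sets are known (Lebesgue, Le). That is the crucial difference from your situation, where the competing prime $q$ is arbitrary and hence uncontrollable. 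Finally, the paper handles the Zsigmondy-exceptional case $\alpha = 2^c-1$ not by splitting the class $1 \pmod 2$ into classes modulo $4$, but by switching to a covering with minimum modulus $3$ (moduli $3,4,5,6,8,10,\ldots,120$, again all distinct), so that $2$-defectiveness never enters. If you want to salvage your Sierpi\'{n}ski-style covering, you must either restrict to those $\alpha$ for which a second odd prime factor of some $\alpha^{2^{j-1}}+1$ can be exhibited, or abandon the repeated modulus $2^t$ as the paper does.
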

\begin{proof}
Since $\alpha=2$ corresponds to Sierpi\'{n}ski's original theorem, we assume that $\alpha\ge 3$. Note that, by Corollary \ref{Cor:Zsig}, the only $n$-defective pairs that are of concern to us here are $(\alpha,\beta)=(2^c-1,1)$, which are $2$-defective. So, the proof is broken into two main cases: $\alpha\ne 2^c-1$ and $\alpha=2^c-1$. A different covering  $\left\{n\equiv r_i \pmod{m_i}\right\}$ is used in each of these cases. We use a covering with minimum modulus 3 in the case when $\alpha=2^c-1$, to circumvent the fact that these sequences are 2-defective. In both cases, we let $p_i$ denote a primitive divisor of $\ds U_{m_i}$. Then, when $n\equiv r_i \pmod{m_i}$, we have
\begin{align*}
U_{n}+(\alpha-1)^2&\equiv U_{r_i}+(\alpha-1)^2\pmod{p_i}
\end{align*}
For brevity of notation, we define $A_i:=U_{r_i}+(\alpha-1)^2$. It is crucial for our arguments that $A_i$ be invertible modulo $p_i$. In other words, we need $A_i\not \equiv 0\pmod{p_i}$ for all $i$.

Assume first that $\alpha \ne 2^c-1$, 
and use the covering:
\begin{center}
$\begin{array}{ccccccc}
  i & & 1&2&3&4&5\\ \hline
  r_i & & 0 & 0 & 1 & 1 & 11\\
 m_i & & 2 & 3 & 4 & 6 & 12.
  \end{array}$
 \end{center}

Next, we verify 
 for each $i\ne 3$, that $A_i\not \equiv 0 \pmod{p_i}$.
This is clear when $i=1,2$, since
$A_i\equiv  (\alpha-1)^2 \not \equiv 0 \pmod{p_i}$.

For $i=4$, we have $A_4=\alpha^2-2\alpha+2$.
Since $p_4$ is a primitive divisor of $U_6$, part (\ref{Cor:3}) of Corollary \ref{Cor:primdiv} tells us that $\alpha^2-\alpha+1\equiv 0 \pmod{p_4}$. If $A_4\equiv 0 \pmod{p_4}$, then \[0\equiv \alpha^2-\alpha+1-(\alpha^2-2\alpha+2)\equiv \alpha-1 \pmod{p_4},\]
which contradicts the fact that $p_4$ is primitive.

Now consider $i=5$. Since $p_5$ is a primitive divisor of $U_{12}$, we have, by part (\ref{Cor:3}) of Corollary \ref{Cor:primdiv}, that
$\alpha^6\equiv -1 \pmod{p_5}$. Using this fact, it is easy to show that
\[0\equiv (-2\alpha^3-\alpha^2+2\alpha)A_5\equiv 5(\alpha-1) \pmod{p_5}.\]
Hence, $p_5=5$, since $\alpha-1\not \equiv 0 \pmod{p_5}$.
Thus, \[\Phi_{12}(\alpha)=\alpha^4-\alpha^2+1\equiv 0 \pmod{5},\] from part (\ref{Cor:3}) of Corollary \ref{Cor:primdiv}. But then, since $\alpha \not \equiv 0 \pmod{p_5}$, we arrive at the contradiction that 2 is a square modulo 5.

Finally, to finish the proof when $\alpha\ne 2^c-1$, we examine the case of $i=3$. Suppose that $A_3\equiv 0 \pmod{p_3}$. Since $p_3$ is a primitive divisor of $U_4$, we have, from part (\ref{Cor:3}) of Corollary \ref{Cor:primdiv}, that $\alpha^2\equiv -1 \pmod{p_3}$. Then \[0\equiv A_3=\alpha^2-2\alpha+2\equiv -2\alpha+1 \pmod{p_3}.\] Clearly, $p_3\ne 2$, and so $\alpha\equiv 1/2 \pmod{p_3}$. Substituting this quantity back into $\alpha^2\equiv -1 \pmod{p_3}$ implies that $p_3=5$, and therefore $\alpha \equiv 3 \pmod{5}$. Unfortunately, no contradiction is achieved here. We use part (\ref{Cor:4}) of Corollary \ref{Cor:primdiv} to show in this situation that there is a second odd primitive divisor $q\ne 5$ of $U_4$. Then we can conclude that $A_3\not \equiv 0 \pmod{q}$. We consider two cases: $\alpha \equiv 3 \pmod{10}$ and $\alpha \equiv 8 \pmod{10}$.

First suppose that $\alpha\equiv 3 \pmod{10}$. Then $\alpha^2+1\equiv 2 \pmod{4}$. Le \cite{Le1999} proved that there are at most two pairs $(\alpha,n)$ of natural numbers such that \begin{equation}\label{Eq:Le}
\alpha^2+1=2\cdot 5^n.
\end{equation}
 Thus, the pairs $(3,1)$ and $(7,2)$ are the only solutions to equation (\ref{Eq:Le}). The solution $(7,2)$ is of no concern to us here, since $7\not\equiv 3 \pmod{10}$. Hence, when $\alpha>3$, there exists an odd prime $q\ne 5$ such that $\alpha^2+1\equiv 0 \pmod{q}$. To show that $q$ is indeed a primitive divisor of $U_4$, it is enough, by part (\ref{Cor:2}) of Corollary \ref{Cor:primdiv}, to show that $q$ does not divide either $\Phi_1(\alpha)$ or $\Phi_2(\alpha)$. But the only prime $q$ that can divide either $\Phi_1(\alpha)$ or $\Phi_2(\alpha)$, and also divide $\alpha^2+1$, is $q=2$. Recall that the case $\alpha=3$ is not an issue here since we are assuming that $\alpha\ne 2^c-1$.

 Next, suppose that $\alpha\equiv 8 \pmod{10}$. Then $\alpha^2+1$ is odd, and we need to examine the equation
 \begin{equation}\label{Eq:Leb}
\alpha^2+1=5^n.
\end{equation}
Lebesgue \cite{Lebesgue1850} proved that there exists at most one pair of natural numbers $(\alpha,n)$ that satisfies (\ref{Eq:Leb}). Thus $(2,1)$ is the only solution to (\ref{Eq:Leb}), and as above, $\alpha^2+1$ has a second odd primitive divisor $q\ne 5$.

Then, choosing $p_3$ to be the appropriate primitive divisor of $U_4$ so that $A_3\not \equiv 0 \pmod{p_3}$, we can apply the Chinese Remainder Theorem to the system of congruences $k\equiv -1/A_i \pmod{p_i}$,
to complete the proof in this case.

Now we turn our attention to the case when $\alpha=2^c-1$.
The Lucas pair $(2^c-1,1)$ is 2--defective, and so we cannot use the previous covering since $m_1=2$ there.
To avoid the 2-defective situation, we use a covering with minimum modulus 3:
\[\begin{array}{cccccccccccccccc}
  i & & 1&2&3&4&5&6&7&8&9&10&11&12&13&14\\ \hline
  r_i & & 0 & 0 & 1 & 5 & 6 & 3 & 10 & 4 & 11 & 2 & 7 & 35 & 25 & 55\\
 m_i & & 3 & 4 & 5 & 6 & 8 & 10 & 12 & 15 & 20 & 24 & 30 & 40 & 60 & 120.
  \end{array}\]
It is easy to check that this is indeed a covering.
First note that $\alpha-1=2^c-2\equiv 0 \pmod{2}$, so that $2$ is not a primitive divisor of $U_{m_i}$ for any $i$.
To ensure that $A_i\not \equiv 0 \pmod{p_i}$,
it is enough, by Corollary \ref{Cor:primdiv} part (\ref{Cor:3}),
 to show that
\begin{equation}\label{gcd condition}
 \gcd\left(\Phi_{m_i}(\alpha),A_i\right)\not \equiv 0 \pmod{p_i}.
\end{equation}
Tedious, but straightforward, arguments similar to the previous case show that (\ref{gcd condition}) is satisfied for all $i$ in this covering. Fortunately, no Diophantine equations must be considered here to show the existence of additional primitive divisors. Coverings with fewer congruences can be chosen with minimum modulus 3, but they all seem to incur the Diophantine considerations. Since the arguments in this case are similar to the previous case, we omit the details.
\end{proof}

\begin{remark}
  In the proof of Theorem \ref{Thm:main} we showed that if $\alpha\equiv 3 \pmod{5}$ and $\alpha\ne 2^c-1$, then $U_4(\alpha,1)$ has at least two distinct odd primitive divisors. This fact overlaps with a result of Schinzel \cite{Schinzel1962prim} when $\alpha$ is twice a square.
\end{remark}

\section{Generalization II}\label{Section:II}

In this section, we generalize Theorem \ref{Thm:Sierpinski} using an approach different from the one used in Section \ref{Section:I}. The main theorem here is:
\begin{thm}\label{Thm:general}
  There are infinitely many Lucas pairs $(\alpha,\beta)$, not produced by Theorem \ref{Thm:main}, for which there exist infinitely many positive integers $k$ such that \[k\left(U_n(\alpha,\beta)+(\alpha-\beta)^2\right)+1\]
  is 
   composite for all integers $n\ge 1$.
\end{thm}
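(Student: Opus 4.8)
The plan is to keep the covering-plus-primes skeleton of Theorem \ref{Thm:Sierpinski}, but to reverse the logic that produced the primes. In Section \ref{Section:I} the Lucas pair was fixed first and a primitive divisor $p_i\mid U_{m_i}$ was extracted for each congruence; here I would instead fix a covering $\{n\equiv r_i\pmod{m_i}\}_{i=1}^{s}$ together with a prime $p_i$ attached to each congruence, and then \emph{manufacture the pair} so that these primes do the required work. The mechanism is the following reduction: if both $\alpha$ and $\beta$ reduce modulo $p_i$ to roots of unity of order dividing $m_i$ (so that $\alpha^{m_i}\equiv\beta^{m_i}\equiv 1\pmod{p_i}$ in $\overline{\F_{p_i}}$), then $U_n\equiv U_{r_i}\pmod{p_i}$ whenever $n\equiv r_i\pmod{m_i}$, whence
\[
U_n(\alpha,\beta)+(\alpha-\beta)^2\equiv U_{r_i}+(\alpha-\beta)^2=:A_i\pmod{p_i}.
\]
Exactly as in the proof of Theorem \ref{Thm:main}, provided each $A_i$ is a \emph{unit} modulo $p_i$ one sets $k\equiv -A_i^{-1}\pmod{p_i}$, and the covering then forces $p_i\mid k\bigl(U_n+(\alpha-\beta)^2\bigr)+1$ for every $n\ge 1$. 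This deliberately sidesteps primitive divisors, matching the remark in the introduction that the prime production here appears to be new.

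To realize the reduction hypotheses I would choose each $p_i$ so that $m_i\mid p_i^2-1$, which guarantees a full set of $m_i$-th roots of unity inside $\F_{p_i^2}$. For each $i$ pick two such roots $\zeta_i,\zeta_i'$ of order dividing $m_i$, taken either both in $\F_{p_i}$ or conjugate over $\F_{p_i}$, and prescribe the residues $P:=\alpha+\beta\equiv\zeta_i+\zeta_i'$ and $Q:=\alpha\beta\equiv\zeta_i\zeta_i'\pmod{p_i}$. Since $(\alpha-\beta)^2=P^2-4Q\equiv(\zeta_i-\zeta_i')^2\pmod{p_i}$ is then forced consistently, no additional constraint on the global integer $(\alpha-\beta)^2$ is created. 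Because the $p_i$ are distinct, the Chinese Remainder Theorem yields an entire residue class of integer pairs $(P,Q)$ modulo $\prod_i p_i$ realizing all of these local prescriptions, and every integer pair in that class produces an $(\alpha,\beta)$ with the required reductions at every $p_i$.

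It remains to harvest genuine Lucas pairs of the desired kind and to complete the argument in $k$. Within the fixed CRT class I would impose, through finitely many further congruences or a routine density count, that $\gcd(P,Q)=1$, that $P^2-4Q>0$ is not a perfect square, and that the two (now irrational) real roots satisfy $\alpha>\beta>1$. These conditions simultaneously make $(\alpha,\beta)$ an honest Lucas pair whose ratio is not a root of unity, keep $U_n+(\alpha-\beta)^2$ a positive strictly increasing sequence of integers, and, since $\beta\neq 1$ places the pair off the line $Q=P-1$ of pairs $(\alpha,1)$, guarantee that the pairs are \emph{not} produced by Theorem \ref{Thm:main}. For each such pair the final Chinese Remainder step gives infinitely many positive $k$, and for all large $k$ the integer $k\bigl(U_n+(\alpha-\beta)^2\bigr)+1$ is a positive multiple of some $p_i$ strictly exceeding $p_i$, hence composite for all $n\ge 1$. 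Letting the representative of the $(P,Q)$-class vary then yields infinitely many qualifying Lucas pairs.

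The hard part will be step two's invertibility requirement: for each $i$ one must be able to choose $\zeta_i\neq\zeta_i'$ (and if necessary adjust $p_i$) so that
$A_i\equiv \frac{\zeta_i^{r_i}-\zeta_i'^{\,r_i}}{\zeta_i-\zeta_i'}+(\zeta_i-\zeta_i')^2\not\equiv 0\pmod{p_i}$.
This is the exact analogue of the checks $A_i\not\equiv 0$ carried out in the proof of Theorem \ref{Thm:main}, where certain residues forced appeals to the Diophantine results of Le and Lebesgue. The optimistic point of the present setup is that choosing the roots $\zeta_i,\zeta_i'$ freely, rather than inheriting a fixed sequence, provides enough slack to make $A_i$ a unit by an elementary case analysis and thereby to avoid any Diophantine input. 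Verifying this uniformly across the chosen covering, together with confirming that the coprimality and non-square-discriminant conditions really do survive in the CRT class (so that infinitely many honest, new Lucas pairs remain), is where the bulk of the technical work will lie.
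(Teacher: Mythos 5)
Your high-level strategy coincides with the paper's own: rather than extracting primitive divisors from a fixed pair, you fix a covering with a prime $p_i$ attached to each congruence and use the Chinese Remainder Theorem to manufacture $\alpha+\beta$ and $\alpha\beta$ (the paper works equivalently with $a=\alpha+\beta$ and $b=(\alpha-\beta)^2$) so that $U_n\equiv U_{r_i}\pmod{p_i}$ on each residue class, and then solve $k\equiv -A_i^{-1}\pmod{p_i}$. Your condition $m_i\mid p_i^2-1$, with $\alpha,\beta$ reducing to $m_i$-th roots of unity in $\F_{p_i^2}$, is a mild generalization of the paper's choice, which takes $m_i=p_i-1$ exactly so that Fermat's little theorem gives the reduction for essentially arbitrary residues once $b$ is a quadratic residue modulo $p_i$.

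The genuine gap is the step you explicitly postpone: proving $A_i\not\equiv 0\pmod{p_i}$. This is not a technicality that "enough slack" will absorb; it is the entire content of the paper's proof, and free choice of the roots of unity does not always provide slack. Concretely, if the covering contains a congruence $n\equiv r_i\pmod{2}$ with $r_i$ odd and attached prime $p_i=5$ (admissible in your framework, since $2\mid 5^2-1$), the only available choice is $\{\zeta_i,\zeta_i'\}=\{1,-1\}$, which forces $A_i\equiv 1+4\equiv 0\pmod{5}$: every choice fails, and whether failure occurs depends on the triple $(r_i,m_i,p_i)$ in a way your proposal never controls. The paper eliminates this problem uniformly with a fixed menu of local choices: take $\alpha+\beta\equiv a_i=0\pmod{p_i}$ (i.e.\ $\zeta_i'=-\zeta_i$), with $b_i=1$ when $r_i=1$, $r_i$ is even, or $r_i=3$ with $p_i\neq 5$, and $b_i=4$ when $r_i\geq 5$ is odd (which forces $p_i\geq 7$); the leftover case $r_i=3$, $p_i=5$ is handled by switching to $a_i=b_i=1$. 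These choices collapse $\bar{A_i}$ to the explicit constants $b_i$, $2$, $5/4$, or $5$, each visibly a unit modulo the relevant $p_i$, so invertibility holds with no case analysis depending on the covering. Second, and relatedly, you never exhibit a covering satisfying your hypotheses (distinct moduli, distinct attached primes, root choices avoiding the obstruction above); since the theorem asserts existence, some explicit covering must be produced, and the paper supplies one (moduli $2,4,6,10,12,18,30,36,60,108,180,270,540$, each $m_i+1$ prime). The remaining pieces of your plan --- the CRT construction of $(P,Q)$ and of $k$, the coprimality and non-square-discriminant conditions (which indeed make the pairs irrational and hence disjoint from Theorem \ref{Thm:main}), and the infinitude coming from arithmetic progressions --- are routine and match the paper; but until the invertibility and the explicit covering are settled, what you have is a plan rather than a proof.
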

The only rational Lucas pairs $(\alpha,\beta)$ that can be generated using the techniques in the proof of Theorem \ref{Thm:general} are such that $\alpha-\beta=1$, and thus the only conceivable overlap with Theorem \ref{Thm:main} is the Lucas pair $(2,1)$. Unfortunately, the algorithm, as described in the proof, does not directly capture this pair. However, a slight modification to the algorithm does the job (see Example (\ref{Ex:Sierpinski})), and so Theorem \ref{Thm:general} can, in some sense, be viewed as a generalization of Theorem \ref{Thm:Sierpinski}. Although primitive divisors were used successfully in the proof of Theorem \ref{Thm:main}, they are more difficult to harness when $\alpha$ and $\beta$ are not rational. For this reason, we abandon the use of primitive divisors in the proof of Theorem \ref{Thm:general}, in favor of a strategy that is somewhat opposite in nature. In the primitive-divisor situation, the primes we use (the primitive divisors themselves) depend on the particular values of $\alpha$ and $\beta$, while in the new approach, we start with a set of primes, and then construct the values of $\alpha$ and $\beta$. Although these methods produce rational, irrational, and nonreal Lucas pairs, depending on the covering used, there is an inherent weakness in the algorithm. Even allowing modifications to the algorithm, it seems that, in general, there is no way of determining ahead of time whether a particular Lucas pair can be captured by this procedure. In fact, there seem to be certain Lucas pairs that cannot be produced by these techniques (see Section \ref{Section:III}).

 The proof of Theorem \ref{Thm:general} is straightforward. Simply choose a particular covering, and use the algorithm to generate an explicit Lucas pair $(\alpha, \beta)$ that satisfies the conditions of the theorem. The algorithm is such that there are infinitely many choices from an arithmetic progression for values of $a$ and $b$, where $\alpha=(a+\sqrt{b})/2$, so that the algorithm automatically produces infinitely many values of $\alpha$ and $\beta$. Then there are infinitely many values of $k$ from an arithmetic progression that satisfy the conditions of the theorem for all values of $\alpha$ and $\beta$. 
In the proof of Theorem \ref{Thm:general}, we give a very specific version of the algorithm which can be used to generate irrational Lucas pairs. However, slight modifications will produce rational or nonreal Lucas pairs. We indicate these versions after the proof, and we provide examples in Section \ref{Section: Additional Examples}.

 \begin{proof}[Proof of Theorem \ref{Thm:general}]
We describe a version of the algorithm that will generate a Lucas pair; then we justify the steps; and finally, we use the algorithm with a particular covering to illustrate the process. Let $\left\{n\equiv r_i \pmod{m_i}\right\}$ be a covering with distinct moduli $m_i$, such that $p_i:=m_i+1$ is prime for all $i$.
For each $i$, we choose integers $a_i$ and $b_i$ according to the following prescription:
\begin{equation}\label{Eq: Menu}\begin{array}{rl}
1. &  \text{If $r_i=1$ or $r_i\equiv 0 \pmod{2}$, then let $a_i=0$ and $b_i=1$.}\\
2. & \text{If $r_i=3$, then let $a_i=0$ and $b_i=1$;}\\
  & \text{ unless $p_i=5$, in which case, let $a_i=1$ and $b_i=1$.}\\
3. & \text{If $r_i\ge 5$ and $r_i\equiv 1 \pmod{2}$, then let $a_i=0$ and $b_i=4$.}\\
\end{array}
\end{equation}
Then, use the Chinese remainder theorem to solve the two systems of congruences \[x\equiv a_i \pmod{p_i}, \quad x\equiv 1 \pmod{2}, \mbox{ \quad and }\vspace*{-.2in}\\\] \begin{equation}\label{Eq:Systems}\vspace*{-.2in}\\\end{equation}
\[y\equiv b_i \pmod{p_i}, \quad y\equiv 1 \pmod{4}.\] Let $a$ and $b$ be respective solutions to the systems in (\ref{Eq:Systems}), and let $\alpha=(a+\sqrt{b})/2$ and $\beta=(a-\sqrt{b})/2$. At this juncture, we must verify that $(\alpha,\beta)$ is a legitimate Lucas pair. Clearly, $\alpha+\beta=a\in \Z$. Observe that $\alpha\beta=(a^2-b)/4$. Since $a\equiv 1 \pmod{2}$ and $b\equiv 1 \pmod{4}$, we have that $\alpha\beta\in \Z$. Next, since $\gcd(\alpha+\beta,\alpha\beta)=1$ if and only if $\gcd(a,b)=1$, we need to be able to select solutions $a$ and $b$ of (\ref{Eq:Systems}) that are relatively prime. To accomplish this task, fist solve for $a$ in the first system of (\ref{Eq:Systems}), and then add additional congruences, if necessary, to the second system in (\ref{Eq:Systems}) to guarantee that $\gcd(a,b)=1$. Then we must check that $\alpha/\beta$ is not a root of unity. Finally, use the Chinese remainder theorem to solve the system of congruences $k\equiv -1/A_i \pmod{p_i}$, where \[A_i:=\frac{\alpha^{r_i}-\beta^{r_i}}{\alpha-\beta}+(\alpha-\beta)^2.\] Then we claim that $k(U_n(\alpha,\beta)+(\alpha-\beta)^2)+1$ is composite for all $n\ge 1$. In fact, we have that
\begin{equation}\label{Eq: Claim}\begin{array}{c}
k(U_n(\alpha,\beta)+(\alpha-\beta)^2)+1 \equiv 0 \pmod{p_i},\\
\mbox{when $n\equiv r_i \pmod{m_i}$}.\\
\end{array}
\end{equation}

To prove that (\ref{Eq: Claim}) is true, we verify the validity of the steps of the algorithm, and show that $A_i\not \equiv 0 \pmod{p_i}$ for all $i$. First note that the conditions in (\ref{Eq: Menu}) guarantee that $b_i\not \equiv 0 \pmod{p_i}$ for all $i$. Consequently, $b\not \equiv 0 \pmod{p_i}$ and $\alpha-\beta\not \equiv 0 \pmod{p_i}$ for all $i$.
For each $i$, let $\alpha_i=(a_i+\sqrt{b_i})/2$, $\beta_i=(a_i-\sqrt{b_i})/2$, and \[\bar{A_i}:=\frac{\alpha_i^{r_i}-\beta_i^{r_i}}{\alpha_i-\beta_i}+(\alpha_i-\beta_i)^2=\frac{\alpha_i^{r_i}-\beta_i^{r_i}}{\alpha_i-\beta_i}+b_i.\] Since $b_i$ is a square modulo $p_i$, and $m_i=p_i-1$ for all $i$, it follows from Fermat's little theorem (even if $\alpha\equiv 0 \pmod{p_i}$ or $\beta\equiv 0 \pmod{p_i}$, which could happen if $n\equiv 3 \pmod{4}$ is a congruence in the covering) that
\[U_n(\alpha,\beta)+(\alpha-\beta)^2 \equiv A_i \equiv \bar{A_i} \pmod{p_i},\]
when $n\equiv r_i \pmod{m_i}$. 
First assume that $a_i=0$. Then straightforward calculations give
\begin{equation}\label{Eq:A_iconditions}\begin{array}{cl}\bar{A_i}&=\ds \frac{\left(\sqrt{b_i}\right)^{r_i-1}\left(1-\left(-1\right)^{r_i}\right)}{2^{r_i}}+b_i\\\\
&=\left\{\begin{array}{cl}
\ds b_i & \mbox{if $r_i\equiv 0 \pmod{2}$}\\\\
\ds \frac{\left(\sqrt{b_i}\right)^{r_i-1}}{2^{r_i-1}}+b_i & \mbox{if $r_i\equiv 1 \pmod{2}$.}\end{array} \right. \end{array}\end{equation}
We refer to the menu (\ref{Eq: Menu}). It is clear that $\bar{A_i}\not \equiv 0 \pmod{p_i}$ in (\ref{Eq:A_iconditions}) when $r_i\equiv 0 \pmod{2}$, since $b_i=1$. When $r_i\equiv 1 \pmod{2}$, there are three cases to consider. If $r_i=1$, then $b_i=1$, and so $\bar{A_i}=2\not \equiv 0 \pmod{p_i}$. If $r_i=3$, then $b_i=1$ and $\bar{A_i}=5/4\not \equiv 0 \pmod{p_i}$, since $p_i\ne 5$. Next, if $r_i\ge 5$, then $p_i\ge 7$. Then, since $b_i=4$ from (\ref{Eq: Menu}), we have that $\bar{A_i}=5\not \equiv 0 \pmod{p_i}$.

Now assume that $a_i=1$. Then $r_i=3$, $p_i=5$, and $b_i=1$ from (\ref{Eq: Menu}). In this case, either $\alpha=0$ and $\beta=1$, or $\alpha=1$ and $\beta=0$. In either situation, we have that $\bar{A_i}=1\not \equiv 0 \pmod{5}$.

 We finish the proof with an example.
Consider the covering
\[\begin{array}{cccccccccccccccc}
  i & & 1&2&3&4&5&6&7&8&9&10&11&12&13\\ \hline
  r_i & & 0 & 1 & 5 & 7 & 3 & 7 & 1 & 19 & 55 & 31 & 139 & 13 & 103\\
 m_i & & 2 & 4 & 6 & 10 & 12 & 18 & 30 & 36 & 60 & 108 & 180 & 270 & 540.
  \end{array}\]
Applying the algorithm to this situation gives:
\[\alpha=\frac{57735618045574774305+\sqrt{41575375575250122841}}{2}, \quad \mbox{and}\] \[k=37170467875892126822.\]
The first three terms of the sequence $k(U_n(\alpha,\beta)+(\alpha-\beta)^2)+1$, in factored form, with $p_i$ in bold, are given in Table \ref{Table:Fac1}.
\begin{table}[h]
\centering
\begin{tabular}{|c|c|}\hline
$n$ & $k(U_n(\alpha,\beta)+(\alpha-\beta)^2)+1$\\ \hline\hline
1 & ${\bf 5}^4\cdot 7\cdot 11\cdot 19\cdot \cdot 31 \cdot 37 \cdot 61 \cdot 109 \cdot 181 \cdot 271 \cdot 541$\\
& $ \cdot 22409 \cdot 372668347052399$ \\ \hline
2 & ${\bf 3}\cdot 24691\cdot 49835109933522332988999783635863781$\\ \hline
3 & $7\cdot 11\cdot {\bf 13} \cdot 19\cdot 37 \cdot 61 \cdot 109 \cdot 181 \cdot 271 \cdot 541 \cdot 2127299$\\
 & $ \cdot 2258992037077 \cdot 155744538873346913742671$\\ \hline
\end{tabular}
\caption{{\footnotesize Factored Terms of $k(U_n(\alpha,\beta)+(\alpha-\beta)^2)+1$}}
\label{Table:Fac1}
\end{table}\\\end{proof}

In general, the algorithm given in the proof of Theorem \ref{Thm:general} will produce an irrational Lucas pair. However, if all residues in the covering are even, or if the only odd residues that appear in the covering are $r_i=1$, then $b_i=1$ for all $i$, and we can take $b=1$. The algorithm generates a rational Lucas pair in this situation (see Example \ref{Ex:Rational}). Also, it is easy to see that there is room for modification of the algorithm. For example, we chose $a_i=0$, for most values of $i$, since it is easier to prove that $A_i$ is invertible modulo $p_i$ in that situation. But to produce the Lucas pair $(2,1)$, we can choose all $a_i=3$ and all $b_i=1$, with an appropriate covering (see Example \ref{Ex:Sierpinski}). To generate a nonreal Lucas pair, we can let $b$ be a negative value in the arithmetic progression produced by solving the second system in (\ref{Eq:Systems}) (see Example \ref{Ex:nonreal}).
  Other modifications to the algorithm are possible, depending on the chosen covering, but these modifications could result in a more complicated set of conditions for $\bar{A_i}$ to be invertible modulo $p_i$.

\subsection{Additional Examples}\label{Section: Additional Examples}
This section contains some more examples illustrating the algorithm used in the proof of Theorem \ref{Thm:general}, and some modified versions of it. To keep the numbers reasonably small, we have chosen coverings in which the maximum modulus is 180 and the greatest common divisor of the moduli is 360.
\begin{ex}\label{Ex:Rational}
This example shows how the algorithm in Theorem \ref{Thm:general} can be used to produce a rational Lucas pair. We use the covering:
 \[\begin{array}{cccccccccccccccc}
  i & & 1&2&3&4&5&6&7&8&9&10&11&12\\ \hline
  r_i & & 1 & 0 & 0 & 0 & 10 & 8 & 2 & 2 & 14 & 38 & 50 & 86\\
 m_i & & 2 & 4 & 6 & 10 & 12 & 18 & 30 & 36 & 40 & 60 & 72 & 180.
  \end{array}\]
  Observe that the only odd residue is $r_1=1$.
 Applying the algorithm gives
  \[\alpha=5406640414743068,\quad \beta=5406640414743067\quad \mbox{and}\] \[k=3604426943162044.\]
The first three terms of the sequence $k(U_n(\alpha,\beta)+(\alpha-\beta)^2)+1$, in factored form, with $p_i$ in bold, are given in Table \ref{Table:Fac2}.
\begin{table}[h]
\centering
\begin{tabular}{|c|c|}\hline
$n$ & $k(U_n(\alpha,\beta)+(\alpha-\beta)^2)+1$\\ \hline\hline
1 & ${\bf 3}^2\cdot 1708529 \cdot 468814849$ \\ \hline
2 & $5\cdot 7 \cdot 11\cdot 13\cdot 17 \cdot 19 \cdot {\bf 31} \cdot 37 \cdot 41 \cdot 61 \cdot 73 \cdot 181 \cdot 2179 \cdot 62143\cdot 4697417$\\ \hline
3 & ${\bf 3}\cdot 8707 \cdot 15328919 \cdot 83120546683 \cdot 9497356395852767786266693$\\ \hline
\end{tabular}
\caption{{\footnotesize Factored Terms of $k(U_n(\alpha,\beta)+(\alpha-\beta)^2)+1$}}\vspace*{.11in}
\label{Table:Fac2}
\end{table}
\end{ex}

\begin{ex}\label{Ex:nonreal}
This example shows how to produce a nonreal Lucas pair. We start with the covering:
 \[\begin{array}{ccccccccccccccc}
  i & & 1&2&3&4&5&6&7&8&9&10&11&12\\ \hline
  r_i & & 0 & 3 & 1 & 3 & 9 & 11 & 17 & 5 & 1 & 5 & 53 & 89\\
  m_i & & 2 & 4 & 6 & 10 & 12 & 18 & 30 & 36 & 40 & 60 & 72 & 180.
  \end{array}\]
We take the smallest positive value of $b$ produced by the algorithm and subtract the least common multiple of the moduli in the second system in (\ref{Eq:Systems}) to get the negative value of $b=-10777658998435559$. Then
  \[\alpha=\frac{6487968497691681+\sqrt{-10777658998435559}}{2}, \quad \mbox{and}\] \[k=1314262889709437.\]
The first three terms of the sequence $k(U_n(\alpha,\beta)+(\alpha-\beta)^2)+1$, in factored form, with $p_i$ in bold, are given in Table \ref{Table:Fac3}.

\begin{table}[h]
\centering
\begin{tabular}{|c|c|}\hline
$n$ & $k(U_n(\alpha,\beta)+(\alpha-\beta)^2)+1$\\ \hline\hline
1 & $-5\cdot {\bf 7} \cdot 13\cdot 19 \cdot 31 \cdot 37 \cdot 41 \cdot 61 \cdot 73 \cdot 181 \cdot 499 \cdot 51131 \cdot 1694253179$\\ \hline
2 & $-{\bf 3}\cdot 5 \cdot 557 \cdot 3319249 \cdot 203292762260868131903$\\ \hline
3 & ${\bf 5}\cdot 11 \cdot 13\cdot 19 \cdot 31 \cdot 37 \cdot 61 \cdot 73 \cdot 181 \cdot 50257221163$\\
& $ \cdot 65736741235555550052593$\\ \hline
\end{tabular}
\caption{{\footnotesize Factored Terms of $k(U_n(\alpha,\beta)+(\alpha-\beta)^2)+1$}}\vspace*{.11in}
\label{Table:Fac3}
\end{table}
\end{ex}

\begin{ex}\label{Ex:Sierpinski}
This example shows how the algorithm in Theorem \ref{Thm:general} can be modified to capture the Lucas pair $(\alpha,\beta)=(2,1)$ and give an alternate proof of Theorem \ref{Thm:Sierpinski}. We start with the covering:

 \[\begin{array}{cccccccccccccccc}
  i & & 1&2&3&4&5&6&7&8&9&10&11&12\\ \hline
  r_i & & 0 & 1 & 1 & 5 & 11 & 15 & 9 & 3 & 23 & 51 & 27 & 27\\
 m_i & & 2 & 4 & 6 & 10 & 12 & 18 & 30 & 36 & 40 & 60 & 72 & 180.
  \end{array}\]
 We modify the algorithm by choosing $a_i=3$ and $b_i=1$ for all $i$, and in addition, we replace the congruence $x\equiv 1 \pmod{2}$ in (\ref{Eq:Systems}) with the congruence $x\equiv 3 \pmod{4}$. The algorithm then produces $\alpha=2$, $\beta=1$, and $k=9579495527398457$.
The first three terms of the sequence $k(U_n(\alpha,\beta)+(\alpha-\beta)^2)+1$, in factored form, with $p_i$ in bold, are given in Table \ref{Table:Fac5}.
\begin{table}[h]
\centering
\begin{tabular}{|c|c|}\hline
$n$ & $k(U_n(\alpha,\beta)+(\alpha-\beta)^2)+1$\\ \hline\hline
1 & ${\bf 5}\cdot 7^2 \cdot 43\cdot 1459 \cdot 607147 \cdot 2053$ \\ \hline
2 & ${\bf 3}\cdot 907 \cdot 14082316100549$\\ \hline
3 & ${\bf 37}\cdot 41\cdot 1399 \cdot 36110153179$\\ \hline
\end{tabular}
\caption{{\footnotesize Factored Terms of $k(U_n(\alpha,\beta)+(\alpha-\beta)^2)+1$}}\vspace*{.11in}
\label{Table:Fac5}
\end{table}
\begin{remark}
  The Sierpi\'{n}ski number $k$ produced by this procedure in Example (\ref{Ex:Sierpinski}) is considerably smaller than the smallest Sierpi\'{n}ski number generated in Sierpi\'{n}ski's original proof.
\end{remark}
\end{ex}

\section{Another Approach} \label{Section:III}

The algorithm used to prove Theorem \ref{Thm:general} (and any modification) appears to be too restrictive to produce certain Lucas pairs. For example, it seems unlikely that the famous Lucas pair $\left((1+\sqrt{5})/2,(1-\sqrt{5})/2\right)$, which generates the Fibonacci sequence $\left\{F_n\right\}$, can be captured using this algorithm. One reason for this is that Fermat's little theorem does not apply if $5$ is not a square modulo $p_i=m_i+1$. However, constructing a covering by replacing such ``bad" moduli with distinct moduli $m_i$, such that $m_i+1$ is prime, and for which $5$ is a square modulo $m_i+1$, is most certainly a difficult task at best, and it is quite plausible that it is impossible. We have been unsuccessful in our attempts to construct such a covering.

The approach used in this section is quite different from the methods used in the previous sections. Instead of directly using primitive divisors, or a covering where each modulus is one less than a prime, we exploit the well-known fact that Lucas sequences $U_n$ are periodic modulo any prime \cite{Rec}. The idea is to construct a covering where each modulus is a period of $U_n$ modulo some prime. If $U_n$ has period $m$ modulo the prime $p$, then $U_m\equiv 0 \pmod{p}$, but $p$ might or might not be a primitive divisor of $U_m$. However, there is always a least positive integer $a(p)$, called the {\it restricted period} \cite{Rec} of $U_n$ modulo $p$, such that $p$ is a primitive divisor of $U_{a(p)}$. So, we are using primitive divisors in some sense, but certainly not in the traditional way. Just as $U_n$ may have more than one primitive divisor, $U_n$ can have the same period modulo more than one prime. Thus, our covering can have repeated moduli, and we make use of this phenomenon to establish Theorem \ref{Thm:Fib}. However, constructing the covering is still somewhat tricky, since, depending on the particular sequence $U_n$, there can be many positive integers $m$ for which there is no prime $p$ such that $U_n$ has period $m$ modulo $p$. For example, the only odd period for $\left\{F_n\right\}$ is $m=3$. Although we are unable to determine, in general, when this process will be successful, we illustrate that the method does work in certain situations by establishing that the procedure is successful in the case of the Fibonacci sequence $\left\{F_n\right\}$. Helpful in the construction of the covering here is the fact, which follows from a result of Lengyel \cite{Lengyel1995}, that given any even number $m\not \in \left\{2,4,6,12,24 \right\}$, there exists at least one prime $p$ such that the period of $\left\{F_n\right\}$ modulo $p$ is $m$.
 The main result of this section is:
\begin{thm}\label{Thm:Fib}
Let $\left\{F_n\right\}$ denote the Fibonacci sequence, defined recursively by $F_0=0$, $F_1=1$, and $F_n=F_{n-1}+F_{n-2}$, for $n\ge 2$. Then there exist infinitely many positive integers $k$ such that the sequence
 $k(F_n+5)+1$ is composite for all integers $n\ge 1$.
\end{thm}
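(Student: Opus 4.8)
The plan is to mimic the covering arguments of Theorems \ref{Thm:Sierpinski}, \ref{Thm:main}, and \ref{Thm:general}, but with the primes $p_i$ now chosen so that the Pisano period of $\{F_n\}$ modulo $p_i$ (the period of the sequence $U_n$ modulo $p_i$) equals the corresponding modulus $m_i$. Concretely, I would look for a covering $\{n \equiv r_i \pmod{m_i}\}$ together with primes $p_i$ such that $m_i$ is exactly the period of $\{F_n\}$ modulo $p_i$. Since $F_0 = 0$, a period $m_i$ forces $F_{m_i} \equiv 0 \pmod{p_i}$ and, crucially, $F_n \equiv F_{r_i} \pmod{p_i}$ whenever $n \equiv r_i \pmod{m_i}$. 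Setting $A_i := F_{r_i}+5$, this gives $k(F_n+5)+1 \equiv k A_i + 1 \pmod{p_i}$ on the class $n \equiv r_i \pmod{m_i}$.

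With this reduction in hand, I would verify $A_i \not\equiv 0 \pmod{p_i}$ for every $i$ (so that $A_i$ is invertible), solve the system $k \equiv -A_i^{-1} \pmod{p_i}$ by the Chinese Remainder Theorem (the $p_i$ being distinct, hence pairwise coprime), and conclude that for every such $k$ and every $n \ge 1$ there is an index $i$ with $n \equiv r_i \pmod{m_i}$, whence $p_i \mid k(F_n+5)+1$. Since $F_n + 5 \ge 6$ for all $n \ge 1$, we have $k(F_n+5)+1 \ge 6k+1$, so restricting to the infinitely many solutions $k$ in the CRT residue class that exceed $\max_i p_i$ guarantees $k(F_n+5)+1 > p_i$, and hence that this value is composite. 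This produces the required infinitude of $k$, and the CRT and compositeness steps are routine, identical in spirit to the earlier theorems.

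The genuinely hard step, and the reason Section \ref{Section:III} needs a new method, is the construction of the covering itself. The admissible moduli are severely constrained: by Lengyel's theorem every even $m \notin \{2,4,6,12,24\}$ occurs as a period of $\{F_n\}$ modulo some prime, but the only admissible odd modulus is $m = 3$, realized solely by $p = 2$ (a period $3$ forces $p \mid F_3 = 2$). Thus essentially every congruence must have even modulus, and each such congruence fixes the parity of the integers it covers; I would therefore have to balance the odd-residue and even-residue classes carefully, using the single modulus-$3$ congruence to patch the remaining integers. Moreover, a covering assembled from even moduli naturally wants repeated moduli, so for each repeated modulus $m$ I would need \emph{several} distinct primes of period $m$, more than the single prime guaranteed by Lengyel; these I would secure by factoring the relevant Fibonacci numbers $F_m$ and retaining the prime factors whose period is exactly $m$.

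Finally, once an explicit covering and an explicit assignment of primes $p_i$ are fixed, the verification $A_i = F_{r_i} + 5 \not\equiv 0 \pmod{p_i}$ reduces to a finite, if tedious, computation: for each $i$ one reduces $F_{r_i}$ modulo $p_i$ and checks it is not congruent to $-5$. I expect the covering construction — subject to the scarcity of odd moduli, the forbidden even moduli $\{2,4,6,12,24\}$, and the need for enough distinct primes at repeated moduli — rather than this bookkeeping, to be the principal obstacle.
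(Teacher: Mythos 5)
Your reduction is exactly the paper's: choose primes $p_i$ whose Pisano period is $m_i$, use periodicity to get $F_n \equiv F_{r_i} \pmod{p_i}$ on each class, check that $A_i = F_{r_i}+5 \not\equiv 0 \pmod{p_i}$, solve $k \equiv -A_i^{-1} \pmod{p_i}$ by the Chinese Remainder Theorem, and take $k$ large enough that divisibility by $p_i$ forces compositeness. All of that is correct, and your accounting of the constraints (the unique odd period $3$ realized only by $p=2$, Lengyel's theorem for even periods outside $\{2,4,6,12,24\}$, the parity obstruction created by even moduli, the need for several distinct primes at repeated moduli, to be found among the prime factors of $F_m$) matches the paper's own discussion of the method.

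But there is a genuine gap: you never produce, nor prove the existence of, the covering itself, and that is the entire mathematical content of the theorem. Everything you label routine is indeed routine; the theorem stands or falls with the constructive step you defer (``I would look for a covering\ldots''). It is not clear a priori that a covering of the integers exists all of whose moduli are Pisano periods: the paper itself emphasizes that it cannot determine in general when this process succeeds, and its concluding remark notes that the covering it exhibits is apparently the first of its kind in the literature. The paper's proof consists precisely of an explicit $133$-congruence covering $\mathcal{C}$, with moduli ranging from $3$ to $1134$, many moduli repeated, and each repetition assigned a distinct prime of that exact period, together with the finite verification you describe. The constraints you list are severe --- a single residue class modulo $3$ is the only odd-modulus congruence available, every other modulus is even and at least $8$, and the supply of primes for a repeated modulus $m$ is limited to the prime factors of $F_m$ whose period is exactly $m$ --- so the existence of such a covering cannot be treated as an expected outcome of careful balancing. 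Until a covering is exhibited (or its existence otherwise established), what you have is a correct strategy and a correct identification of the obstacle, not a proof.
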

\begin{proof}
We use a slightly different format to present the covering $\C$ here.
 The 133 elements of $\C$ are indicated by ordered triples $(r_i,m_i,p_i)$, where the congruence in the covering corresponding to this ordered triple is $n\equiv r_i \pmod{m_i}$. The prime $p_i$ in the ordered triple is the prime such that $\left\{F_n\right\}$ modulo $p_i$ has period $m_i$.
Then, for each congruence $n\equiv r_i \pmod{m_i}$ in the covering, we have that $F_n\equiv F_{r_i} \pmod{p_i}$. We must then check that $F_{r_i}+5 \not \equiv 0 \pmod{p_i}$. Finally, we use the Chinese remainder theorem to solve for $k$ in the system of 133 congruences $k\equiv -1/(F_{r_i}+5)  \pmod{p_i}$.
The covering we use here is:
$\C=\{(0, 3, 2), (0, 8, 3), (1, 10, 11), (6, 14, 29), (6, 16, 7), (5, 18, 19),\\
  (3, 20, 5), (2, 28, 13), (19, 30, 31), (12, 32, 47), (29, 36, 17),\\
  (27, 40, 41), (22, 42, 211), (20, 48, 23)  , (5, 50, 101)  , (45, 50, 151),\\
  (35, 54, 5779)  , (18, 56, 281), (37, 60, 61), (0, 70, 71), (12, 70, 911),\\
  (47, 72, 107), (14, 80, 2161), (10, 84, 421), (89, 90, 181), (85, 90, 541),\\
  (92, 96, 1103), (13, 100, 3001), (53, 108, 53), (17, 108, 109),\\
  (42, 112, 14503), (7, 120, 2521), (40, 126, 1009), (124, 126, 31249),\\
  (42, 140, 141961), (100, 144, 103681), (85, 150, 12301), (115, 150, 18451),\\
  (78, 160, 1601), (46, 160, 3041), (50, 162, 3079), (140, 162, 62650261),\\
  (122, 168, 83), (50, 168, 1427), (73, 180, 109441), (75, 200, 401),\\
  (175, 200, 570601), (110, 210, 21211), (196, 210, 767131),\\
  (4, 216, 11128427), (158, 224, 10745088481), (193, 240, 241),\\
  (133, 240, 20641), (82, 252, 35239681), (29, 270, 271), (17, 270, 811),\\
  (119, 270, 42391), (209, 270, 119611), (154, 280, 12317523121),\\
  (28, 288, 10749957121), (25, 300, 230686501), (124, 324, 2269),\\
  (232, 324, 4373), (148, 324, 19441), (26, 336, 167), (206, 336, 65740583),\\
  (98, 350, 54601), (168, 350, 560701), (28, 350, 7517651),\\
  (238, 350, 51636551), (133, 360, 10783342081), (88, 378, 379),\\
  (130, 378, 85429), (214, 378, 912871), (52, 378, 1258740001),\\
  (393, 400, 9125201), (153, 400, 5738108801), (278, 420, 8288823481),\\
  (292, 432, 6263), (196, 432, 177962167367), (215, 450, 221401),\\
  (35, 450, 15608701), (335, 450, 3467131047901),\\
   (446, 480, 23735900452321), (268, 504, 1461601), (436, 504, 764940961),\\
    (107, 540, 1114769954367361), (306, 560, 118021448662479038881),\\
   (73, 600, 601), (433, 600, 87129547172401), (92, 630, 631),\\
    (476, 630, 1051224514831), (260, 630, 1983000765501001),\\
   (340, 648, 1828620361), (364, 648, 6782976947987),\\
    (638, 672, 115613939510481515041), (658, 700, 701),\\
   (474, 700, 17231203730201189308301), (13, 720, 8641),\\
  (515, 720, 13373763765986881), (700, 756, 38933),\\
  (472, 756, 955921950316735037), (715, 800, 124001), (315, 800, 6996001),\\
  (782, 800, 3160438834174817356001), (742, 810, 1621), (94, 810, 4861),\\
  (580, 810, 21871), (418, 810, 33211), (256, 810, 31603395781),\\
  (34, 810, 7654861102843433881), (194, 840, 721561),\\
  (266, 840, 140207234004601), (508, 864, 3023), (412, 864, 19009),\\
  (14, 864, 447901921), (686, 864, 48265838239823),\\
  (242, 900, 11981661982050957053616001), (46, 1008, 503),\\
  (494, 1008, 4322424761927), (830, 1008, 571385160581761),\\
   (302, 1050, 1051), (722, 1050, 9346455940780547345401),\\
  (512, 1050, 14734291702642871390242051), (590, 1080, 12315241),\\
  (950, 1080, 100873547420073756574681), (942, 1120, 6135922241),\\
  (270, 1120, 164154312001), (750, 1120, 13264519466034652481),\\
  (428, 1134, 89511254659), (680, 1134, 1643223059479),\\
  (806, 1134, 68853479653802041437170359),\\
   (1058, 1134, 5087394106095783259)\}$.\\ The smallest positive value of $k$ found using the Chinese remainder theorem has 949 digits.

We do not give the first three terms of the sequence $k(F_n+5)+1$ in factored form since they are too large.
\end{proof}

\begin{remark}
  As far as the author knows, the covering $\C$ used in the proof of Theorem \ref{Thm:Fib} is the first time a covering, all of whose moduli are periods of the Fibonacci sequence modulo some prime, has appeared in the literature. The periods of the Fibonacci sequence are also known as the Pisano periods. 
\end{remark}
\section{A Nonlinear Variation}

Given a nonlinear polynomial $f(k)$, we can ask whether there exist infinitely many positive integers $k$ such that $f(k)\cdot 2^n+1$ is composite for all integers $n\ge 1$. With $f(k)=k^r$, Chen \cite{Chen4} proved that the answer is affirmative for $r\not \equiv 0,4,6,8 \pmod{12}$. Using a different approach, Filaseta, Finch and Kozek \cite{FFK} have been able to lift Chen's restriction on $r$. More recently, Finch, Harrington and the author (unpublished manuscript) have established a similar result with $f(k)=k^r+1$, when $r$ is not divisible by 8 or 17449. We should point out that Chen, and Filaseta, Finch and Kozek also addressed other concerns in their respective papers. For example, these authors actually showed that each composite term in the sequence has at least two distinct prime divisors.

 We end this paper with a nonlinear variation using Lucas sequences.
\begin{thm}\label{Thm:nonlinear}
  Let $m=\prod_{i=1}^{11}p_i$, where the $p_i$ are given in Table \ref{Table:CovNonlinear}, and let $\alpha\equiv 5 \pmod{m}$ be a positive integer.
Then there exist infinitely many positive integers $k$ such that
\[k^2\left(U_n(\alpha,1)+(\alpha-1)^2\right)+1\]
is composite for all integers $n\ge 1$.
\end{thm}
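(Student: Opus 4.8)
The plan is to adapt the covering method of Theorem~\ref{Thm:main}, the essential new feature being that the coefficient $k^2$ forces a quadratic residue condition at each prime. Write $U_n=U_n(\alpha,1)=(\alpha^n-1)/(\alpha-1)$. Since $\alpha\equiv 5\pmod{m}$ and $m=\prod_{i=1}^{11}p_i$, we have $\alpha\equiv 5\pmod{p_i}$ for every prime $p_i$ in Table~\ref{Table:CovNonlinear}; as each $p_i$ is odd, $\alpha-1\equiv 4$ is invertible modulo $p_i$, and $U_n(\alpha,1)\equiv U_n(5,1)\pmod{p_i}$ for all $n$. The ordered triples in the table are to be chosen so that $\{n\equiv r_i\pmod{m_i}\}$ is a covering and so that $5^{m_i}\equiv 1\pmod{p_i}$ (that is, $m_i$ is a multiple of the multiplicative order of $5$ modulo $p_i$). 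This guarantees $\alpha^{m_i}\equiv 1\pmod{p_i}$, whence $U_n\equiv U_{r_i}\pmod{p_i}$ whenever $n\equiv r_i\pmod{m_i}$.

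First I would set $A_i:=U_{r_i}+(\alpha-1)^2\equiv U_{r_i}(5,1)+16\pmod{p_i}$, a quantity that is entirely determined modulo $p_i$ by the table and is independent of the particular admissible $\alpha$. As in the previous proofs one must check that $A_i\not\equiv 0\pmod{p_i}$, so that $A_i$ is invertible. The genuinely new ingredient is that I then need $-1/A_i$ to be a quadratic residue modulo $p_i$ for every $i$; since $-1/A_i=-A_i\cdot(A_i^{-1})^2$, this is equivalent to $-A_i$ being a quadratic residue, i.e.\ $\left(\tfrac{-A_i}{p_i}\right)=1$. Only under this Legendre-symbol condition is the congruence $k^2\equiv -1/A_i\pmod{p_i}$ solvable. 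Verifying both the nonvanishing and the residue condition at each of the eleven primes is a finite computation carried out with the fixed value $\alpha\equiv 5$.

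Once the quadratic residue condition holds at each prime, the argument closes routinely. For each $i$ choose a solution $k_i$ of $k^2\equiv -1/A_i\pmod{p_i}$, and use the Chinese Remainder Theorem to produce $k\equiv k_i\pmod{p_i}$ for all $i$, so that $k^2\equiv -1/A_i\pmod{p_i}$ simultaneously. Given any $n\ge 1$, the covering furnishes an index $i$ with $n\equiv r_i\pmod{m_i}$, and then
\[
k^2\left(U_n+(\alpha-1)^2\right)+1\equiv k^2A_i+1\equiv -1+1\equiv 0\pmod{p_i},
\]
so $p_i$ divides the term. Replacing $k$ by $k+t\,m$ for $t=0,1,2,\dots$ preserves every congruence (as $p_i\mid m$) and yields infinitely many admissible $k$; for all sufficiently large such $k$ each term exceeds $\max_i p_i$ and is therefore composite. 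That $(\alpha,1)$ is a legitimate Lucas pair for $\alpha\ge 5$ is immediate.

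I expect the real difficulty to lie entirely in producing the table. In the linear theorems \emph{any} covering with $A_i\not\equiv 0$ works, because $k\equiv -1/A_i$ is always solvable; here roughly half of the candidate primes fail the test $\left(\tfrac{-A_i}{p_i}\right)=1$, so the covering, the residues $r_i$, and the base value $5$ must be located by a guided search that simultaneously delivers a covering and satisfies all eleven quadratic residue constraints. Exhibiting one explicit such table, as in Table~\ref{Table:CovNonlinear}, is the substance of the proof; everything after that is the bookkeeping described above.
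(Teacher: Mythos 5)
Your proposal is correct and follows essentially the same route as the paper: reduce $\alpha\equiv 5\pmod{p_i}$, use the covering from Table \ref{Table:CovNonlinear} with each $p_i$ a primitive divisor of $U_{m_i}$ (so the order of $5$ modulo $p_i$ is $m_i$, giving $U_n\equiv U_{r_i}\pmod{p_i}$), verify that $A_i\not\equiv 0\pmod{p_i}$ and that $-1/A_i$ is a quadratic residue modulo $p_i$, and finish by solving $k^2\equiv -1/A_i\pmod{p_i}$ and applying the Chinese Remainder Theorem. Your additional remarks on the equivalence $\left(\tfrac{-1/A_i}{p_i}\right)=\left(\tfrac{-A_i}{p_i}\right)$ and on why the terms are composite (not merely divisible by some $p_i$) are correct fill-ins of details the paper leaves implicit.
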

\begin{proof}
  The covering $\left\{n\equiv r_i \pmod{m_i}\right\}$ we use here is given in Table \ref{Table:CovNonlinear}.
  \begin{table}[h]
\centering
\begin{tabular}{ccccccccccccc}
$i$ & & 1&2&3&4&5&6&7&8&9&10&11\\ \hline
$r_i$ && 1 & 1 & 0 & 1 & 2 & 6 & 0 & 12 & 14 & 18 & 0\\ 
$m_i$ && 2 & 3 & 4 & 6 & 8 & 9 & 12 & 18 & 24 & 36 & 72\\ 
$p_i$ && 3 & 31 & 13 & 7 & 313 & 19 & 601 & 5167 & 390001 & 37 & 73\\ 
\end{tabular}
\caption{{\footnotesize The covering with the primitive divisors $p_i$ of $U_{m_i}$}}\vspace*{.11in}
\label{Table:CovNonlinear}
\end{table}
The prime $p_i$ is a primitive divisor of $U_{m_i}$.
  For each $i$, let $A_i=U_{r_i}+(\alpha-1)^2$, so that $U_n+(\alpha-1)^2 \equiv A_i \pmod{p_i}$ when $n\equiv r_i \pmod{m_i}$. It is then easy to verify that $A_i\not \equiv 0 \pmod{p_i}$, and that $-1/A_i$ is a square modulo $p_i$ for all $i$. We solve each of the congruences $k^2\equiv -1/A_i \pmod{p_i}$, and choose a solution $s_i$. This gives a system of congruences $k\equiv s_i \pmod{p_i}$, and we can apply the Chinese remainder theorem to this system to find infinitely values of $k$. The smallest positive value of $k$ produced by this process is $k=117050073288612071969896$.
  The first three terms of the sequence $k^2(U_n(5,1)+16)+1$, in factored form, with $p_i$ in bold, are given in Table \ref{Table:Fac6}.
 \begin{table}[h]
\centering
\begin{tabular}{|c|c|}\hline
$n$ & $k^2(U_n(5,1)+16)+1$\\ \hline\hline
1 & ${\bf 3}\cdot 7 \cdot 23\cdot 31 \cdot 53 \cdot 199 \cdot 431 \cdot 3132881 \cdot 3384559190303$\\
  & $ \cdot 322723988351788951$\\ \hline
2 & ${\bf 313}\cdot 571 \cdot 853 \cdot 1459 \cdot 5931337 \cdot 336267671 \cdot 18194404469$\\
  & $ \cdot 37342701311$\\ \hline
3 & ${\bf 3}^3\cdot 17\cdot 377843803411203610837 \cdot 3712925610260096762131991$\\ \hline
\end{tabular}
\caption{{\footnotesize Factored Terms of $k^2(U_n(5,1)+16)+1$}}\vspace*{.11in}
\label{Table:Fac6}
\end{table}

\end{proof}
\begin{remark}
 The computer calculations and verifications needed in this paper were done using either MAGMA or Maple.
\end{remark}


















\end{document}